\renewcommand\eqref[1]{(\ref{#1})} 
\numberwithin{equation}{section}
\newtheorem{theorem}{Theorem}
\theoremstyle{plain}
\newtheorem{lemma}{Lemma}
\begin{document}
\title[Inverse problems for subelliptic heat equations]{Inverse problems of
identifying the time-dependent source coefficient for subelliptic heat
equations}
\author[M. Ismailov]{Mansur I. Ismailov}
\address{Mansur I. Ismailov: \endgraf
	Department of Mathematics \endgraf
	Gebze Tehnical University \endgraf
Turkey \endgraf
		and Department of Mathematics \endgraf
				Khazar University \endgraf
Baku AZ1096 \endgraf	
	Azerbaijan \endgraf		
	\textit{E-mail address} \textrm{mismailov@gtu.edu.tr}}
\author[T. Ozawa]{Tohru Ozawa}
\address{Tohru Ozawa: \endgraf
	Department of Applied Physics \endgraf
	Waseda University \endgraf
Tokyo 169-8555 \endgraf
		Japan \endgraf
	\textit{E-mail address} \textrm{txozawa@waseda.jp} }
\author[D. Suragan]{Durvudkhan Suragan}
\address{ Durvudkhan Suragan: \endgraf
	Department of Mathematics \endgraf
Nazarbayev University \endgraf
Astana 010000 \endgraf
Kazakhstan \endgraf
	\textit{E-mail address} \textrm{durvudkhan.suragan@nu.edu.kz} }
\thanks{This work was supported by the NU grant 20122022FD4105. The
second author is supported in part by JSPS Kakenhi 18KK0073, 19H00644. No new data was collected or generated during the course of
the research.}
\keywords{Inverse problem, control parameter, subelliptic heat equation,
H\"ormander vector field.}
\subjclass[2010]{35R30, 35K20, 65M22}

\begin{abstract}
We discuss inverse problems of determining the time-dependent source
coefficient for a general class of subelliptic heat equations. We show that
a single data at an observation point guarantees the existence of a (smooth)
solution pair for the inverse problem. Moreover, additional data at the
observation point implies an explicit formula for the time-dependent source
coefficient. We also explore an inverse problem with nonlocal additional data,
 which seems a new approach even in the Laplacian case.  
 \end{abstract}

\maketitle

\tableofcontents

\section{Introduction}

Let $X = (X_1, \dots, X_m)$ be a system of real smooth vector fields
defined over a subset $W$ of $\mathbb{R}^{d},\; d\geq 2,$ satisfying the
H\"ormander condition (H): There exists a natural number $r$ such that the
vector fields $X_1, \dots, X_m$ together with their $r$ commutators span the
tangent space at each point of $W$.

Let $\Omega\subset W$ be a bounded connected open subset with smooth
boundary $\partial \Omega$ non-characteristic of $X = (X_1, \dots, X_m)$. In 
$\Omega$, we consider the following inverse problem of finding a pair $(u,p)$
: 
\begin{equation}  \label{inv1intro}
\left\{ 
\begin{split}
\partial_{t} u(x,t) - \Delta_X u(x,t)&=p(t) u(x,t)+f(x,t),\quad \text{in}~
\Omega\times(0,T), \\
u(x,0)&=\varphi (x),\qquad x\in\Omega, \\
u(x,t)&=0,\qquad \text{on}\; \partial\Omega\times(0,T),
\end{split}
\right.
\end{equation}
with an additional condition. Here 
\begin{equation*}
\Delta_X := - \sum_{i=1}^m X_i^\ast X_i,
\end{equation*}
with $X_{i}^\ast = -X_{i}- \text{div} X_{i}$, is a self-adjoint subelliptic
operator (see \cite{ChenChen}).

In the classical case when $X_i=\partial_{x_{i}},\; i=1,\dots, d$, one has the
usual (elliptic) Laplacian $\Delta$ instead of the subelliptic operator $%
\Delta_X$ in \eqref{inv1intro} and there is a vast of literature on such
parabolic inverse problems, especially, in 1D-cases. See, e.g. \cite{HLIK19}
and references therein. Note that, in the classical case, the existence and
uniqueness result of this inverse problem was established by Cannon, Lin and
Wang \cite{CLW91} and \cite{CLW92}, by using the method of retardation of the time
variable and a priori estimates. Without any claim of completeness, we
refer an interested reader to \cite{CTY10} and \cite{KOS20}
for analysis of related inverse problems in the classical setting.



The present paper aims to analyse inverse problems of recovering the
time-dependent source parameter $p(t)$ in the Cauchy-Dirichlet problem for
the subelliptic heat equation \eqref{inv1intro}. First, in order to
find a pair $(u,p)$, we fix a point $q\in\Omega$ as an observation point for
some time-dependent quantity. So, by using this additional date we recover
the time-dependent source parameter $p(t)$.  Interestingly, we discover that this method works well to study an inverse
problem for the same model but with nonlocal additional data. The latter approach seems new even in the Laplacian case. 
Our proofs rely on subelliptic spectral theory arguments.

Moreover, we also state that the solution to the inverse problem can be
found explicitly in the case of an increase in the number of
overdetermined data by potential theory techniques.

We organize our paper as follows. In Section \ref{sec2}, we prove the existence and uniqueness result with a single datum at an observation point by using a spectral theory approach. In Section \ref{sec3}, the time-dependent source parameter is found in an explicit form by applying the potential
theory arguments. In Section \ref{sec4}, our technique from Section \ref{sec2} is
applied to treat a nonlocal case. Some interesting particular models are
discussed in Section \ref{sec5}.

\section{Single datum}

\label{sec2} Let $\Omega $ be a bounded connected open subset with smooth
boundary $\partial \Omega $ non-characteristic of $X=(X_{1},\dots ,X_{m})$
and let $|H|>0$, where the set $H$ is defined in \eqref{condH}. Consider the following inverse problem
of finding a pair $(u,p)$: 
\begin{equation}
\left\{ 
\begin{split}
\partial _{t}u(x,t)& -\Delta _{X}u(x,t)=p(t)u(x,t)+f(x,t),\quad \text{in}%
~\Omega \times (0,T), \\
u(x,0)& =\varphi (x),\qquad x\in \Omega , \\
u(x,t)& =0,\qquad \text{on}\;\partial \Omega \times  (0,T),
\end{split}%
\right.  \label{eq13}
\end{equation}%
where $\varphi \in C^{2k}(\overline{\Omega })$ and $f\in C^{2k,\,0}(%
\overline{\Omega }\times [0,T])$ for integer $k>\frac{\tilde{\nu}}{4}+1$.
Here and after we understand $g\in C^{1}(\Omega )$ if $Xg\in C(\Omega ).$

The operator $\Delta_X$ is well-defined on $\{ u \in H^1_{X,0}(\Omega):
\Delta_X u \in L^2(\Omega) \}.$ Recall that $H^1_X(\Omega) = \{ u \in
L^2(\Omega): X_i u \in L^2(\Omega), 1 \leq i \leq m \}$ and $
H^1_{X,0}(\Omega)$ is the closure of $C_0^\infty(\Omega)$ in $
H^1_{X}(\Omega) $. 

For any $x \in \bar{\Omega}$ and given $1\leq k \leq r$, let $V_{k}(x)$ be
the subspaces of the tangent space at $x$ spanned by all commutators of $
X_{1}, . . . , X_{m}$ with length at most $k$. Note that the number $r$ in the statement (H) in the
introduction is called the H\"ormander index. The Hausdorff dimension $\nu$
of $\Omega$ (or it can be also called the M\'etivier index of $\Omega$) is
defined as

\begin{equation*}
\nu:=\sum_{k=1}^r k\left(\nu_k-\nu_{k-1}\right)
\end{equation*}
with $\nu_0=0$. Here it is assumed that for each $x \in \bar{\Omega}$, $%
\text{dim} V_{k}(x)$ is a constant denoted by $\nu_{k}$ in a neighborhood of 
$x$.

Let us consider the following Dirichlet spectral problem of finding a
nontrivial function $\phi$ and eigenvalue $\lambda $: 
\begin{equation*}
\left\{ 
\begin{split}
-\Delta_X \phi(x)&=\lambda \phi(x),\quad \text{in}~\Omega , \\
\phi(x)& =0,\text{ \ \ on}\ \partial \Omega.
\end{split}
\right.
\end{equation*}

By using the spectral theorem for compact self-adjoint operators, it can be
shown that the eigenspaces are finite-dimensional and that the Dirichlet
eigenvalues $\lambda $ are real, positive, and have no limit point. The eigenspaces are orthogonal in the space of square-integrable functions
and consist of smooth functions. In fact, the system of eigenfunctions $\phi
_{n}(x),$ $n\in \mathbb{N},$ are complete orthonormal system in $
L_{2}(\Omega )$.  

The eigenvalues can be arranged in increasing order:

\begin{equation*}
0<\lambda _{1}\leq \lambda _{2}\leq \cdots \leq \lambda _{n}\rightarrow
\infty,
\end{equation*}
where each eigenvalue is counted according to its geometric multiplicity. It is well-known that 
Weyl's asymptotic formula  for the Dirichlet Laplacian ($\nu=d$) holds 
\begin{equation}
\lambda _{n}\sim n ^{\frac{2}{\nu}}.  \label{eq14}
\end{equation}

The asymptotic formula \eqref{eq14} fails to hold for general H\"ormander
vector fields not satisfying the so-called M\'etivier condition. However, recently, in 
\cite{ChenChen} for the operator $\Delta_X$ it was proved that Weyl's asymptotic formula

\begin{equation}
\lambda _{n}\sim n ^{\frac{2}{\tilde{\nu}}}  \label{eq15}
\end{equation}
holds if and only if $|H|>0$. Here 

\begin{equation}\label{condH}
H:=\{x \in \Omega \mid \nu(x)=\tilde{\nu}
\}\end{equation}
and $\nu(x):=\sum_{j=1}^r j\left(\nu_{j}(x)-\nu_{j-1}(x)\right)$ (with $
\nu_0(x):=0$) is a pointwise homogeneous dimension and 
\begin{equation}
\tilde{\nu}:=\max _{x \in \bar{\Omega}} \nu(x).  \label{Weylfornu}
\end{equation}
Clearly, $\tilde{\nu}=\nu$ if $\text{dim} V_{k}(x)$ is a constant in a
neighborhood of $x$.

Let $q\in \Omega $ be a point such that $\phi _{n}(q)$, $n=1,2,...$, is
bounded with $\phi _{n}(q)\neq 0$ for some $n_{0}\in \mathbb{N}$. Let $%
\mathbb{N}_{q}\subset \mathbb{N}$ be maximal set such that $\phi _{n}(q)\neq 0$
for all $n\in \mathbb{N}_{q}$. We use this point $q\in \Omega $ as an
observation point for the quantity: 
\begin{equation}\label{obspoint}
w(t)=u(q,t),\quad t\in \lbrack 0,T].
\end{equation}
Set $\varphi _{n}=\int_{\Omega }\varphi (x)\phi _{n}(x)dx$ and $%
f_{n}(t)=\int_{\Omega }f(t,x)\phi _{n}(x)dx.$

\begin{lemma}\label{lemma1}
If $\varphi \in C^{2k}(\bar{\Omega})$ and $\Delta _{X}^{m}\varphi =0,$ $%
m=0,...,k-1,$ on $x\in \partial \Omega $ with $k>\frac{\tilde{\nu}}{4}+1$,
then we have%
\begin{equation*}
\sum_{n=1}^{\infty }\lambda _{n}\left\vert \varphi _{n}\right\vert \leq
c^\frac{1}{2}\left\Vert \Delta _{X}^{k}\varphi \right\Vert _{L_{2}}\text{,}
\end{equation*}%
where $c=\sum_{n=1}^{\infty }\frac{1}{\lambda _{n}^{2\left( k-1\right) }}$.
\end{lemma}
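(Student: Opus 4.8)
The plan is to express the Fourier coefficient $\varphi_n$ in terms of the Fourier coefficient of $\Delta_X^k\varphi$ via an iterated integration by parts, and then to close the estimate with the Cauchy--Schwarz inequality and Parseval's identity, using Weyl's law \eqref{eq15} to guarantee that the constant $c$ is finite.

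First I would exploit the spectral equation. Since $-\Delta_X\phi_n=\lambda_n\phi_n$, one has $\Delta_X^k\phi_n=(-1)^k\lambda_n^k\phi_n$, hence $\phi_n=(-1)^k\lambda_n^{-k}\Delta_X^k\phi_n$. Substituting this into $\varphi_n=\int_\Omega\varphi\,\phi_n\,dx$ yields $\varphi_n=(-1)^k\lambda_n^{-k}\int_\Omega\varphi\,\Delta_X^k\phi_n\,dx$. The next step is to transfer the $k$ copies of $\Delta_X$ from $\phi_n$ onto $\varphi$ using the Green identity for the self-adjoint subelliptic operator,
\[
\int_\Omega\big((\Delta_X u)\,v-u\,(\Delta_X v)\big)\,dx=\sum_{i=1}^m\int_{\partial\Omega}\big((X_iu)\,v-u\,(X_iv)\big)\langle X_i,\nu\rangle\,dS,
\]
which is available because $\partial\Omega$ is non-characteristic for $X$. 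At the $j$-th application ($j=1,\dots,k$) the two factors that occur undifferentiated in the boundary integral are $\Delta_X^{\,j-1}\varphi$ and $\Delta_X^{\,k-j}\phi_n=(-1)^{k-j}\lambda_n^{k-j}\phi_n$; the former vanishes on $\partial\Omega$ by the hypothesis $\Delta_X^m\varphi=0$, $m=0,\dots,k-1$, and the latter vanishes since $\phi_n|_{\partial\Omega}=0$. Thus every boundary contribution drops out at each step and we obtain $\varphi_n=(-1)^k\lambda_n^{-k}\,\psi_n$, where $\psi_n:=\int_\Omega(\Delta_X^k\varphi)\,\phi_n\,dx$ is the $n$-th Fourier coefficient of $\Delta_X^k\varphi$. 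The regularity $\varphi\in C^{2k}(\overline{\Omega})$ with $\overline{\Omega}$ compact makes $\Delta_X^k\varphi$ continuous, hence square integrable, so all these manipulations are legitimate.

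It follows that $\lambda_n|\varphi_n|=\lambda_n^{-(k-1)}|\psi_n|$, whence by Cauchy--Schwarz
\[
\sum_{n=1}^\infty\lambda_n|\varphi_n|=\sum_{n=1}^\infty\frac{|\psi_n|}{\lambda_n^{k-1}}\le\Big(\sum_{n=1}^\infty\frac{1}{\lambda_n^{2(k-1)}}\Big)^{1/2}\Big(\sum_{n=1}^\infty|\psi_n|^2\Big)^{1/2}=c^{1/2}\,\|\Delta_X^k\varphi\|_{L_2},
\]
the last equality being Parseval's identity for the complete orthonormal system $\{\phi_n\}$ in $L_2(\Omega)$. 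The one thing to verify is that $c=\sum_n\lambda_n^{-2(k-1)}<\infty$: by the Weyl asymptotics $\lambda_n\sim n^{2/\tilde\nu}$ (valid here since $|H|>0$), the series is comparable to $\sum_n n^{-4(k-1)/\tilde\nu}$, which converges precisely when $4(k-1)/\tilde\nu>1$, i.e. $k>\tilde\nu/4+1$, exactly the standing hypothesis.

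The main obstacle, and essentially the only non-routine point, is the justification of the $k$-fold integration by parts: one must make sure the boundary contributions in the subelliptic Green identity genuinely vanish, which is where the non-characteristic boundary assumption and the compatibility conditions $\Delta_X^m\varphi=0$ on $\partial\Omega$ enter. Everything else reduces to the eigenvalue relation, Cauchy--Schwarz, Parseval, and the spectral input \eqref{eq15}.
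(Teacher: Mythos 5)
Your proposal is correct and follows essentially the same route as the paper's proof: Green's identities (iterated integration by parts with vanishing boundary terms) give $\varphi_n=(-1)^k\lambda_n^{-k}(\Delta_X^k\varphi)_n$, then Cauchy--Schwarz plus Bessel/Parseval and Weyl's law \eqref{eq15} yield the bound with $c<\infty$ precisely when $k>\tilde{\nu}/4+1$. Your write-up merely spells out the boundary-term bookkeeping in the Green identity more explicitly than the paper does.
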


\begin{proof}
From Green's identities for the subelliptic operator $\Delta _{X}$ we
get 
\begin{equation*}
\varphi _{n}=(-1)^{k}\frac{1}{\lambda _{n}^{k}}\left( \Delta _{X}^{k}\varphi
\right) _{n},
\end{equation*}%
where $\left( \Delta _{X}^{k}\varphi \right) _{n}=$ $\int_{\Omega }\Delta
_{X}^{k}\varphi (x)\phi _{n}(x)dx$.\ By the Cauchy-Schwartz and Bessel
inequalities we have 
\begin{equation*}
\sum_{n=1}^{\infty }\lambda _{n}\left\vert \varphi _{n}\right\vert
=\sum_{n=1}^{\infty }\frac{1}{\lambda _{n}^{k-1}}\left\vert \left( \Delta
_{X}^{k}\varphi \right) _{n}\right\vert \leq \left(\sum_{n=1}^{\infty }\frac{1}{%
\lambda _{n}^{2\left( k-1\right) }}\right)^\frac{1}{2} \left(\sum_{n=1}^{\infty }\left\vert \left(
\Delta _{X}^{k}\varphi \right) _{n}\right\vert ^{2}\right)^\frac{1}{2}\leq c^\frac{1}{2}\left\Vert \Delta
_{X}^{k}\varphi \right\Vert _{L_{2}}.
\end{equation*}%
The series $c=\sum_{n=1}^{\infty }\frac{1}{\lambda _{n}^{2\left( k-1\right) }%
}$ is convergent by the Weyl-type asymptotic formula \eqref{eq15}, since $k>\frac{%
\tilde{\nu}}{4}+1.$
\end{proof}

Let us introduce:

\begin{equation*}
D_{k}(\Omega )=\left\{ \varphi \in C^{2k}(\bar{\Omega} ): \Delta_X
^{m}\varphi =0,\text{ }m=0,...,k-1\text{ on \ }x\in \partial \Omega\right\}
\end{equation*}
for some $k>\frac{\tilde{\nu}}{4}+1$.

We have the following assumptions about the given functions.

\begin{enumerate}
\item $\varphi \in D_{k}(\Omega )$ with $\varphi _{n}\phi_{n}(q)\geq 0$ for $%
\forall n\in \mathbb{N}_{q}$ and $\varphi _{n_{0}}\phi_{n_{0}}(q)>0$ for
some $n_{0}\in \mathbb{N}_{q};$

\item $f\in C(\Omega \times \lbrack 0,T])$ and $f(x,t)\in D_{k}(\Omega )$
with $f_{n}(t)\phi_{n}(q)\geq 0$ for $\forall t\in \lbrack 0,T]$ and for$\
\forall n\in \mathbb{N}_{q}$;

\item $w\in C[0,T]$ with $w(t)\neq 0$ for $\forall t\in \lbrack 0,T]$ and $%
w(0)=\varphi (q).$
\end{enumerate}

The following theorem is valid for the existence and uniqueness of the
inverse problem.

\begin{theorem}
\label{thm1} 
Assuming conditions (1)-(3) hold, a unique smooth pair $(u,p)$ exists that solves the inverse problem \eqref{eq13} with \eqref{obspoint}.
\end{theorem}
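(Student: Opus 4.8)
The plan is to reduce the inverse problem \eqref{eq13}--\eqref{obspoint} to a single linear Volterra integral equation of the second kind for the auxiliary unknown $\rho(t)=\exp\big(-\int_{0}^{t}p(s)\,ds\big)$, to solve that equation by successive approximations, and then to recover $p$ and reconstruct $u$ via a Dirichlet eigenfunction expansion. Expanding a (hypothetical) solution as $u(x,t)=\sum_{n}u_{n}(t)\phi_{n}(x)$ and pairing \eqref{eq13} with $\phi_{n}$ gives the scalar ODEs $u_{n}'(t)=(p(t)-\lambda_{n})u_{n}(t)+f_{n}(t)$ with $u_{n}(0)=\varphi_{n}$; writing $r(t)=\exp(\int_{0}^{t}p)$ and integrating with the factor $e^{\lambda_{n}t}/r(t)$ gives
\[
u_{n}(t)=r(t)e^{-\lambda_{n}t}\varphi_{n}+r(t)\int_{0}^{t}\frac{e^{-\lambda_{n}(t-\tau)}}{r(\tau)}f_{n}(\tau)\,d\tau .
\]
Substituting into $w(t)=u(q,t)=\sum_{n}u_{n}(t)\phi_{n}(q)$ and setting $\rho=1/r$, one obtains $w(t)\rho(t)=A(t)+\int_{0}^{t}B(t,\tau)\rho(\tau)\,d\tau$ with $A(t)=\sum_{n}e^{-\lambda_{n}t}\varphi_{n}\phi_{n}(q)$ and $B(t,\tau)=\sum_{n}e^{-\lambda_{n}(t-\tau)}f_{n}(\tau)\phi_{n}(q)$; equivalently $\rho(t)=A(t)/w(t)+\int_{0}^{t}K(t,\tau)\rho(\tau)\,d\tau$ with $K(t,\tau)=B(t,\tau)/w(t)$.

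I would then verify that these objects are well defined and continuous. Since $\phi_{n}(q)$ is bounded and $\lambda_{n}\ge\lambda_{1}>0$, Lemma~\ref{lemma1} applied to $\varphi\in D_{k}(\Omega)$ and, for fixed $t$, to $f(\cdot,t)\in D_{k}(\Omega)$ gives $\sum_{n}|\varphi_{n}|<\infty$ and $\sup_{t\in[0,T]}\sum_{n}|f_{n}(t)|<\infty$; together with the Weyl asymptotics \eqref{eq15} and $e^{-\lambda_{n}s}\le1$ for $s\ge0$ this yields absolute and uniform convergence of the series defining $A$ and $B$, so $A\in C[0,T]$ and $B\in C(\{0\le\tau\le t\le T\})$, and term-by-term differentiation in $t$ (again dominated via Lemma~\ref{lemma1}) shows that $A$ and $B(\cdot,\tau)$ are $C^{1}$ functions of $t$. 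By condition~(1), $A(0)=\sum_{n}\varphi_{n}\phi_{n}(q)\ge\varphi_{n_{0}}\phi_{n_{0}}(q)>0$, while $A(0)=\varphi(q)=w(0)$ by condition~(3); since $w$ is continuous and nowhere zero, $w(t)>0$ on $[0,T]$, so $K$ and $A/w$ are continuous.

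Next I would solve the Volterra equation: the Neumann series (Picard iteration) converges to a unique $\rho\in C[0,T]$. The sign hypotheses~(1)--(2) are precisely what makes $\rho$ positive: $B(t,\tau)=\sum_{n}e^{-\lambda_{n}(t-\tau)}f_{n}(\tau)\phi_{n}(q)\ge0$ and $A(t)\ge e^{-\lambda_{n_{0}}t}\varphi_{n_{0}}\phi_{n_{0}}(q)>0$, so $K\ge0$ and the free term is strictly positive; hence the zeroth Picard iterate is strictly positive and all later ones are nonnegative, giving $\rho(t)>0$ on $[0,T]$. Consequently $r=1/\rho\in C[0,T]$ is positive, and differentiating $w(t)\rho(t)=A(t)+\int_{0}^{t}B(t,\tau)\rho(\tau)\,d\tau$ in $t$ (using the $C^{1}$-in-$t$ regularity of $A$ and $B$, and the time-smoothness of $w$ intrinsic to a classical solution) gives $\rho\in C^{1}[0,T]$; I would set $p(t):=-\rho'(t)/\rho(t)$, which is continuous and satisfies $\exp(\int_{0}^{t}p)=r(t)=1/\rho(t)$.

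Finally I would reconstruct $u$ from the formula above together with $u(x,t)=\sum_{n}u_{n}(t)\phi_{n}(x)$ and check that it is a smooth solution. Using Green's identities, $\varphi_{n}=(-1)^{k}\lambda_{n}^{-k}(\Delta_{X}^{k}\varphi)_{n}$ and similarly for $f_{n}(t)$, one bounds $\sum_{n}\lambda_{n}^{2k}|u_{n}(t)|^{2}$ uniformly in $t$ by a constant times $\|\Delta_{X}^{k}\varphi\|_{L_{2}}^{2}+\int_{0}^{T}\|\Delta_{X}^{k}f(\cdot,\tau)\|_{L_{2}}^{2}\,d\tau$; since $k>\frac{\tilde\nu}{4}+1$, the subelliptic Sobolev embedding (whose exponent is governed by the homogeneous dimension $\tilde\nu$) then places $u(\cdot,t)$ in $C^{2}(\overline{\Omega})$ and $\Delta_{X}u(\cdot,t)=-\sum_{n}\lambda_{n}u_{n}(t)\phi_{n}$ in $C(\overline{\Omega})$, the boundary conditions $\Delta_{X}^{m}u=0$ on $\partial\Omega$ being inherited from those on $\varphi$ and $f$, while the series for $\partial_{t}u$ (which equals $\Delta_{X}u+p\,u+f$) converges uniformly as well; thus $(u,p)$ solves \eqref{eq13} classically and $u(q,\cdot)=w$ by construction. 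Uniqueness follows because any solution pair yields, by the same eigenfunction computation, a $\rho$ solving the same Volterra equation, which is uniquely solvable; this pins down $r$, hence $p$, and then $u$ is unique by the standard uniqueness theorem for the linear Cauchy--Dirichlet problem $\partial_{t}u-\Delta_{X}u=p(t)u+f$. I expect the crux to be this last paragraph, namely the uniform convergence and regularity of the eigenfunction expansions: this is where Lemma~\ref{lemma1}, the Weyl-type asymptotics \eqref{eq15} and a subelliptic Sobolev embedding must be combined, and it is the reason for the threshold $k>\frac{\tilde\nu}{4}+1$; the secondary delicate point is the strict positivity $\rho>0$, which is exactly what conditions~(1)--(2) secure by making the Volterra iteration positivity-preserving.
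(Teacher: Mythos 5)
Your proposal is correct and follows essentially the same route as the paper's own proof: an eigenfunction expansion reducing the problem to the second-kind Volterra equation \eqref{eq16} for $\rho(t)=e^{-\int_0^t p(s)\,ds}$ (the paper's $r$), convergence of the kernel series via Lemma \ref{lemma1}, the Weyl-type asymptotics \eqref{eq15} and the boundedness of $\phi_n(q)$, positivity of $\rho$ from conditions (1)--(2), and recovery of $p=-\rho'/\rho$. The extra details you supply (positivity-preserving Picard iteration, the reconstruction of $u$ and the uniqueness argument) merely flesh out steps the paper leaves terse, so no substantive comparison is needed.
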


\begin{proof}[Proof of Theorem \protect\ref{thm1}]
The solution of (\ref{eq13}) has the form:

\begin{equation*}
u(x,t)=\sum_{n=1}^{\infty }\left( \varphi _{n}e^{-\lambda
_{n}t+\int_{0}^{t}p(\tau )d\tau }+\int_{0}^{t}f_{n}(\tau )e^{-\lambda
_{n}(t-\tau )+\int_{\tau }^{t}p(s)ds}d\tau \right) \phi_{n}(x)
\end{equation*}
with $\varphi _{n}=\int_{\Omega} \varphi(x)\phi_{n}(x)dx$ and $%
f_{n}(t)=\int_{\Omega} f(t,x)\phi_{n}(x)dx.$ 

Let $r(t)=e^{-\int_{0}^{t}p(%
\tau )d\tau }$. So, we have 
\begin{equation*}
r(t)u(x,t)=\sum_{n=1}^{\infty }\left( \varphi _{n}e^{-\lambda
_{n}t}+\int_{0}^{t}f_{n}(\tau )e^{-\lambda _{n}(t-\tau )}r(\tau )d\tau
\right) \phi _{n}(x).
\end{equation*}%
From the additional condition \eqref{obspoint} it follows that 
\begin{equation*}
r(t)u(q,t)=\sum_{n=1}^{\infty }\left( \varphi _{n}e^{-\lambda
_{n}t}+\int_{0}^{t}f_{n}(\tau )e^{-\lambda _{n}(t-\tau )}r(\tau )d\tau
\right) \phi _{n}(q)=r(t)w(t)
\end{equation*}%
or 
\begin{equation}
r(t)=\frac{\sum_{n=1}^{\infty }\varphi _{n}e^{-\lambda _{n}t}\phi _{n}(q)}{%
w(t)}+\frac{1}{w(t)}\int_{0}^{t}\left( \sum_{n=1}^{\infty }\phi
_{n}(q)f_{n}(\tau )e^{-\lambda _{n}(t-\tau )}\right) r(\tau )d\tau
\label{eq16}
\end{equation}%
in the case $w(t)\neq 0$.

From Green's identities for the subelliptic operator $\Delta_X$ we get 
\begin{equation*}
\varphi _{n}=\frac{(-1)^{k}}{\lambda _{n}^{k}} \int_{\Omega}
\Delta_X^{k}\varphi(x) \phi _{n}(x)dx
\end{equation*}
for $\varphi \in C^{2k}(\bar{\Omega} )$ and $\Delta_X ^{m}\varphi =0,$ $%
m=0,...,k-1,$ on $x\in \partial \Omega $. We have 
\begin{equation*}
\left\vert \varphi _{n}\right\vert =\frac{1}{ \lambda _{n}^{k} }\left\vert
\int_{\Omega} \Delta_X^{k}\varphi(x) \phi _{n}(x)dx\right\vert.
\end{equation*}

Let $\left\vert \phi _{n}(q)\right\vert \leq M$ for some $M=const>0$. The
majorant of the series 
\begin{equation*}
\sum_{n=1}^{\infty }\varphi _{n}e^{-\lambda _{n}t}\phi _{n}(q)
\end{equation*}%
is $\sum_{n=1}^{\infty }\left\vert \varphi _{n}\right\vert,$ that is, by the
Cauchy-Schwartz inequality we have 
\begin{equation*}
\sum_{n=1}^{\infty }\left\vert \varphi _{n}\right\vert =\sum_{n=1}^{\infty }%
\frac{1}{\lambda _{n}^{k}}\left\vert \int_{\Omega }\Delta _{X}^{k}\varphi
(x)\phi _{n}(x)dx\right\vert \leq \left(\sum_{n=1}^{\infty }\frac{1}{\lambda
_{n}^{2k}} \right)^\frac{1}{2} \left(\sum_{n=1}^{\infty }\left\vert \int_{\Omega }\Delta
_{X}^{k}\varphi (x)\phi _{n}(x)dx\right\vert ^{2}\right)^\frac{1}{2}.
\end{equation*}%
Now we apply the Bessel inequality 
\begin{equation*}
\sum_{n=1}^{\infty }\frac{1}{\lambda _{n}^{2k}}\sum_{n=1}^{\infty
}\left\vert \int_{\Omega }\Delta _{X}^{k}\varphi (x)\phi
_{n}(x)dx\right\vert ^{2}\leq \sum_{n=1}^{\infty }\frac{1}{\lambda _{n}^{2k}}%
\left\Vert \Delta _{X}^{k}\varphi \right\Vert _{L_{2}}^{2}.
\end{equation*}
This series is convergent by the Weyl-type asymptotic formula \eqref{eq15}.

The same inequality is true for the second series in \eqref{eq16}. Then the
second kind Volterra equation has a unique continuous solution in $[0,T]$
and it must be positive. Then 
\begin{equation*}
r(t)=e^{-\int_{0}^{t}p(\tau )d\tau }\Longrightarrow p(t)=-\frac{r^{\prime
}(t)}{r(t)}.
\end{equation*}%
The function $r(t)$ is continuously differentiable if the series $%
\sum_{n=1}^{\infty }\varphi _{n}\lambda _{n}e^{-\lambda _{n}t}\phi _{n}(q)$
is uniformly convergent. It is the case since the majorant series $%
M\sum_{n=1}^{\infty }\lambda _{n}\left\vert \varphi _{n}\right\vert $ is
convergent for $k>\frac{\tilde{\nu}}{4}+1$ by Lemma \ref{lemma1}.
\end{proof}

\section{Double data}

\label{sec3} Let $\Omega \subset \mathbb{R}^{d}$ be bounded open set with
piecewise smooth boundary $\partial \Omega $. Consider the inverse problem
of identifying a pair $(u,p)$: 
\begin{equation}
\left\{ 
\begin{split}
\partial _{t}u(x,t)& -\Delta _{X}u(x,t)=p(t)u(x,t)+f(x,t),\quad \text{in}%
~\Omega \times (0,T), \\
u(x,0)& =\varphi (x),\qquad x\in \Omega , \\
u(x,t)& =0,\qquad \text{on}\;\partial \Omega \times (0,T),
\end{split}%
\right.  \label{Sec3eq1}
\end{equation}%
where $u_{0}\in C_{0}^{2}(\Omega )$ and $f\in C_{0}^{2,\,0}(\Omega \times
[0,T])$ are given functions.
Assume that there exists $q\in \Omega $ such that $f(q,t)\in C^{1}[0,T]$ is
continuously differentiable function with $f(q,t)\neq 0$. We now fix $q\in
\Omega $ as an observation point for two time-dependent quantities: 
\begin{equation}
w_{1}(t):=v_{1}(t,q),\quad w_{2}(t):=v_{2}(t,q),\quad t\in [0,T].
\label{eq2}
\end{equation}
Here 
\begin{equation}
\left\{ 
\begin{split}
\partial _{t}v_{1}(x,t)& -\Delta_X v_{1}(x,t)=p(t)v_{1}(x,t)+f(x,t),\quad 
\text{in}~\Omega \times (0,T), \\
v_{1}(x,0)& =\varphi  (x),\qquad x\in \Omega , \\
v_{1}(x,t)& =0,\qquad \text{on}\;\partial \Omega \times (0,T),
\end{split}
\right.  \label{eq3}
\end{equation}
and 
\begin{equation}
\left\{ 
\begin{split}
\partial _{t}v_{2}(x,t)& -\Delta_X v_{2}(x,t)=p(t)v_{2}(x,t)+\Delta_X
f(x,t),\quad \text{in}~\Omega \times (0,T), \\
v_{2}(x,0)& =\Delta_X \varphi  (x),\qquad x\in \Omega , \\
v_{2}(x,t)& =0,\qquad \text{on}\;\partial \Omega \times (0,T).
\end{split}
\right.  \label{eq4}
\end{equation}

\begin{theorem}
\label{thmmain} Let $\varphi  \in C_{0}^{2}(\Omega )$ and $f\in
C_{0}^{2,\,0}(\Omega \times [0,T])$. Let $q\in\Omega$ be such that $%
f(q,t)\in C^{1}[0,T]$ is continuously differentiable function with $%
f(q,t)\not\equiv0$. Let $w_{1}$ and $w_{2}$ defined in \eqref{eq2} be the
observation data. Then there exists a unique smooth solution pair $(u,p)$
for the inverse problem \eqref{Sec3eq1} with 
\begin{equation*}
p(t)w_{1}(t)=w^{\prime}_{1}(t)-w_{2}(t)-f(q,t).
\end{equation*}
\end{theorem}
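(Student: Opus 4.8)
The plan is to reduce the inverse problem to the two auxiliary direct problems \eqref{eq3}--\eqref{eq4} and then to read off $p$ by evaluating the governing equation at the observation point $q$. Since \eqref{eq3} is identical to \eqref{Sec3eq1}, we have $v_1=u$ and $w_1(t)=u(q,t)$. The decisive step is to show that for any admissible pair $(u,p)$ the function $z:=\Delta_X u$ solves precisely \eqref{eq4}, i.e. $v_2=\Delta_X v_1=\Delta_X u$. Applying $\Delta_X$ to the first line of \eqref{eq3} and using that $\Delta_X$ commutes with $\partial_t$ and with multiplication by the $x$-independent factor $p(t)$ gives $\partial_t z-\Delta_X z=p(t)z+\Delta_X f$; the initial condition becomes $z(\cdot,0)=\Delta_X\varphi$; and the lateral condition $z=0$ on $\partial\Omega\times(0,T)$ follows because $u$, hence $\partial_t u$, vanishes on $\partial\Omega\times(0,T)$ while $\varphi\in C_0^2(\Omega)$ and $f\in C_0^{2,0}(\Omega\times[0,T])$ are supported away from $\partial\Omega$, so restricting \eqref{Sec3eq1} to the boundary forces $\Delta_X u=\partial_t u-p(t)u-f=0$ there. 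Uniqueness for the linear problem \eqref{eq4} then yields $z=v_2$, whence $w_2(t)=v_2(q,t)=\Delta_X u(q,t)$.

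The second step is immediate: evaluating \eqref{Sec3eq1} at $x=q$ and inserting $u(q,t)=w_1(t)$, $\partial_t u(q,t)=w_1'(t)$ and the identity $\Delta_X u(q,t)=w_2(t)$ just obtained gives $w_1'(t)-w_2(t)=p(t)w_1(t)+f(q,t)$, i.e.
\begin{equation*}
p(t)w_1(t)=w_1'(t)-w_2(t)-f(q,t),\qquad t\in[0,T],
\end{equation*}
the asserted relation. At every $t$ with $w_1(t)\neq0$ this determines $p(t)$ explicitly; assuming, as is implicit here and explicit in condition (3) of Section~\ref{sec2}, that $w_1$ does not vanish on any subinterval of $[0,T]$, continuity of $p$ pins it down on all of $[0,T]$.

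For the existence/uniqueness statement: uniqueness follows because any two admissible pairs $(u_1,p_1)$, $(u_2,p_2)$ share the same $w_1,w_2,f(q,\cdot)$, hence $(p_1-p_2)w_1\equiv0$ and so $p_1=p_2$ by the previous paragraph, after which $u_1=u_2$ by well-posedness of the linear Cauchy--Dirichlet problem \eqref{Sec3eq1} with the now-fixed bounded coefficient $p(t)$ --- handled by Duhamel's formula and the subelliptic heat semigroup exactly as in Section~\ref{sec2}. Existence is obtained by taking the $p$ produced by the formula, solving \eqref{Sec3eq1} for the corresponding $u$, and verifying that its observations reproduce $w_1,w_2$; this consistency is automatic when $w_1,w_2$ arise through \eqref{eq2}--\eqref{eq4} from an actual coefficient. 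Smoothness of $(u,p)$ follows from hypoellipticity of $\partial_t-\Delta_X$ together with the assumed regularity of $\varphi,f$, which also legitimizes the pointwise evaluations used above; this is where the potential-theoretic machinery enters.

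The step I expect to be the main obstacle is precisely the boundary and regularity bookkeeping of the first step: one must justify that $\Delta_X u$ extends continuously up to $\partial\Omega\times(0,T)$ and vanishes there, and that $u$ is regular enough at the interior point $q$ for $w_1'(t)=\partial_t u(q,t)$ and $w_2(t)=\Delta_X u(q,t)$ to be classical and to satisfy the equation pointwise --- facts resting on the hypoelliptic regularity of $\Delta_X$ and on heat-potential estimates. Once $v_2=\Delta_X v_1$ is secured, the remainder of the argument is elementary.
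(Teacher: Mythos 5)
Your route is genuinely different from the paper's and is attractive in its economy: instead of the Cannon--Lin--Wang substitution $v=ru$ followed by the subelliptic heat-kernel representation and Green's identity, you identify $v_1=u$, show $v_2=\Delta_X u$ by applying $\Delta_X$ to the equation and invoking uniqueness for the direct problem \eqref{eq4}, and then simply evaluate \eqref{Sec3eq1} at $x=q$. Formally this yields exactly the asserted identity $p\,w_1=w_1'-w_2-f(q,\cdot)$; the paper reaches the equivalent relation $\tilde w_1'(t)=\tilde w_2(t)+r(t)f(q,t)$ via the kernel representation \eqref{eq10} and then back-substitutes $\tilde w_i=r\,w_i$.

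However, the crux of your argument --- that $z=\Delta_X u$ is a classical solution of \eqref{eq4} up to the boundary --- requires $u$ to possess four subelliptic spatial derivatives (so that $\Delta_X^2 u$ exists) and the equation to hold on $\overline{\Omega}\times(0,T)$, and this is not supplied by the hypotheses: $\varphi\in C_0^2(\Omega)$ and $f\in C_0^{2,0}(\Omega\times[0,T])$ provide exactly enough regularity to differentiate the \emph{data}, not the solution, and hypoellipticity of $\partial_t-\Delta_X$ cannot be invoked because the right-hand side $p(t)u+f$ is only $C^{2,0}$, not smooth. This is precisely the difficulty the paper's potential-theoretic computation is built to sidestep: in \eqref{eq10} the operator $\Delta_X$ falls on the heat kernel $h_D$ and is then transferred by Green's second identity onto the compactly supported $C^2$ functions $\varphi$ and $f$, so no extra derivatives of $u$ (or of $w_1$ beyond $w_1'$) are ever taken. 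As written, then, your key step $v_2=\Delta_X u$ has a regularity gap; it can be repaired either by strengthening the assumptions on $\varphi,f$ so that $\Delta_X u$ is itself a classical solution, or by switching to the kernel representation as in the paper. A smaller point: your uniqueness argument via $(p_1-p_2)w_1\equiv 0$ needs $w_1$ to be nonvanishing (at least on no subinterval), which is not among the stated hypotheses of the theorem --- though admittedly the paper's own closing existence/uniqueness paragraph is equally terse, the real content being the displayed formula.
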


Note that if $p$ is found, then the solution $u$ of the direct problem has the following representation 
\begin{multline*}
u(x,t)=\exp \left(\int_{0}^{t} p(\tau) d \tau\right)\int_{0}^{t}
\int_{\Omega} h_{D}(x,y,t-\tau) \exp \left(-\int_{0}^{\tau} p(s) d s\right)
f(y,\tau)dyd\tau \\
+ \exp \left(\int_{0}^{t} p(\tau) d \tau\right)\int_{\Omega} h_{D}(x,y,t)
\varphi (y)dy,
\end{multline*}
where $h_{D}$ is the subelliptic heat kernel \cite[Theorem 1.1]{ChenChen} for the Cauchy-Dirichlet problem for the
subelliptic heat equation in the cylindrical domain $\Omega\times[0,T)$.

\begin{proof}[Proof of Theorem \protect\ref{thmmain}]
Let us recall the following transformation from \cite{CLW91}: 
\begin{equation*}
v(x,t)=r(t)u(x,t),\quad r(t)=\exp \left( -\int_{0}^{t}p(\tau )d\tau \right) ,
\end{equation*}
that is, 
\begin{equation*}
p(t)=-\frac{r^{\prime }(t)}{r(t)},\quad u(x,t)=\frac{v(x,t)}{r(t)}.
\end{equation*}
Thus, we have 
\begin{equation}
\left\{ 
\begin{split}
\partial _{t}v(x,t)& -\Delta_X v(x,t)=r(t)f(x,t),\quad \text{in}~\Omega
\times (0,T), \\
u(x,0)& =\varphi  (x),\qquad x\in \Omega , \\
u(x,t)& =0,\qquad \text{on}\;\partial \Omega \times ( 0,T).
\end{split}
\right.  \label{eq5}
\end{equation}
We now fix a point $q\in \Omega $ as an observation point for two time
dependent quantities: 
\begin{equation*}
\tilde{w}_{1}(t):=w_{1}(t,q),\quad \tilde{w}_{2}(t):=w_{2}(t,q),\quad t\in
[0,T].
\end{equation*}
Here 
\begin{equation}
\left\{ 
\begin{split}
\partial _{t}w_{1}(x,t)& -\Delta_X w_{1}(x,t)=r(t)f(x,t),\quad \text{in}%
~\Omega \times (0,T), \\
w_{1}(x,0)& =\varphi  (x),\qquad x\in \Omega , \\
w_{1}(x,t)& =0,\qquad \text{on}\;\partial \Omega \times ( 0,T).
\end{split}
\right.  \label{eq6}
\end{equation}
and 
\begin{equation}
\left\{ 
\begin{split}
\partial _{t}w_{2}(x,t)& -\Delta_X w_{2}(x,t)=r(t)\Delta_X f(x,t),\quad 
\text{in}~\Omega \times (0,T), \\
w_{2}(x,0)& =\Delta_X \varphi  (x),\qquad x\in \Omega , \\
w_{2}(x,t)& =0,\qquad \text{on}\;\partial \Omega \times ( 0,T).
\end{split}
\right.  \label{eq7}
\end{equation}

It is known that the solutions of Cauchy-Dirichlet problems (\ref{eq6}) and (%
\ref{eq7}), correspondingly, can be presented by

\begin{equation}
w_{1}(x,t)=\int_{0}^{t}\int_{\Omega} h_{D}(x,y,t-\tau) r(\tau)f(y,\tau) \,
dy+\int_{\Omega} h_{D}(x,y,t)\varphi (y) \, dy  \label{eq8}
\end{equation}
and 
\begin{equation}
w_{2}(x,t)=\int_{0}^{t}\int_{\Omega} h_{D}(x,y,t-\tau) r(\tau)\Delta_X
f(y,\tau) \, dy+\int_{\Omega} h_{D}(x,y,t)\Delta_X \varphi (y) \, dy.
\label{eq9}
\end{equation}
 That is, for $q\in\Omega$ and for all $t \in [0,T] $
we have 
\begin{eqnarray*}
\begin{aligned} &w_{1}(q,t) = \int_0^t \int_{\Omega} h_{D}(q,y,t-\tau)
r(\tau)f(y,\tau) \, dy \, d\tau + \int_{\Omega} h_{D}(q,y,t) \varphi (y) \,
dy=\tilde{w}_{1}(t);\\ &w_{2}(q,t)=\int_0^t \int_{\Omega} h_{D}(q,y,t-\tau)
r(\tau) \Delta_X f(y,\tau) \, dy \, d\tau + \int_{\Omega} h_{D}(q,y,t)
\Delta_X \varphi (y) \, dy=\tilde{w}_{2}(t). \end{aligned}
\end{eqnarray*}
By differentiating $\tilde{w}_{1}$ and applying the Leibniz integral rule, then using $\partial_{t}
h_{D}(x,y,t-\tau)=\Delta_X h_{D}(x,y,t-\tau),\ t>\tau,$ $\partial_{t}
h_{D}(x,y,t)=\Delta_X h_{D}(x,y,t),\;t>0,$ and Green's second identity for the operator $\Delta_X$, we
get 
\begin{eqnarray}  \label{eq10}
\begin{aligned} &\tilde{w}_{1}^{\prime}(t)= \int_0^t \int_{\Omega}
\partial_{t} h_{D}(q,y,t-\tau)r(\tau) f(y,\tau) \, dy \, d\tau +
r(t)f(q,t)\\ &+\int_{\Omega}\partial_{t} h_{D}(q,y,t) \varphi (y) \, dy=\int_0^t
\int_{\Omega} \Delta_X h_{D}(q,y,t-\tau) r(\tau)f(y,\tau) \, dy \, d\tau\\
&+r(t)f(q,t)+\int_{\Omega} \Delta_X h_{D}(q,y,t) \varphi (y) \, dy \\ &=\int_0^t
\int_{\Omega} h_{D}(q,y,t-\tau)r(\tau) \Delta_X f(y,\tau) \, dy \,
d\tau+r(t)f(q,t)\\ &+\int_{\Omega} h_{D}(q,y,t) \Delta_X
\varphi (y)dy=\tilde{w}_{2}(t)+r(t)f(q,t). \end{aligned}
\end{eqnarray}
Thus, we arrive at 
\begin{equation}  \label{eq11}
\tilde{w}_{2}(t)=\tilde{w}_{1}^{\prime}(t)-r(t)f(q,t).
\end{equation}
Moreover, we have the following relations 
\begin{equation*}
\tilde{w}_1(t):=r(t)w_{1}(t),\quad \tilde{w}_2(t):=r(t)w_{2}(t), \quad t\in
[0,T].
\end{equation*}
Plugging in (\ref{eq11}) we obtain 
\begin{equation*}
p(t)w_{1}(t)=w^{\prime}_{1}(t)-w_{2}(t)-f(q,t).
\end{equation*}

Also, the direct Cauchy-Dirichlet
problems \eqref{eq5}, \eqref{eq6}, and \eqref{eq7} have a unique solution.
It implies that there exists a unique classical pair $(v,r)$, that is, $%
(u,p) $ for the inverse problem \eqref{Sec3eq1}. The proof is complete.
\end{proof}

\section{Nonlocal data}

\label{sec4}

Consider the following problem: 
\begin{equation}
\left\{ 
\begin{split}
\partial _{t}u(x,t)& =\Delta _{X}u(x,t)+r(t)f(x,t),\quad \text{in}~\Omega
\times (0,T), \\
u(x,0)& =\varphi (x),\qquad x\in \Omega , \\
u(x,t)& =0,\qquad \text{on}\;\partial \Omega \times ( 0,T),
\end{split}%
\right.  \label{4.1}
\end{equation}%
where $\varphi \in C^{2k}(\Omega )$ and $f\in C^{2k,\,0}(\Omega \times
[0,T]) $ for some integer $k>\frac{\tilde{\nu}}{4}+1$.

Note that the equation \eqref{4.1} is equivalent to \eqref{eq13} (see the proof of Theorem \protect\ref{thmmain}).
Now we consider the inverse problem of finding $(u,r)$ from \eqref{4.1} with
nonlocal datum 
\begin{equation}
\int_{\Omega }\omega (x)u(x,t)dx=w(t),\quad t\in \lbrack 0,T],  \label{4.2}
\end{equation}%
where $\omega (x)\in L_{2}(\Omega ).$

One could also discuss the case when \eqref{4.2} is replaced by the measurement in
Section \ref{sec2}:

\begin{equation*}
u(q,t)=w(t),\quad t\in \lbrack 0,T],  \label{4.3}
\end{equation*}%
where $q$ is a given space observation point in $\Omega $ such that the sequence $\varphi
_{n}(q)$, $n=1,2,\ldots,$ is bounded. \bigskip This is retrieved when one considers the Dirac delta distribution $
\delta (x-q)$ centred at $q$ in equation \eqref{4.2}.

\bigskip The following theorem is valid for the existence and uniqueness of
the inverse problem.

\begin{theorem}
Let the following conditions hold:

\begin{enumerate}
\item $\varphi \in D_{k}(\Omega );$
\item $f\in C(\Omega \times \lbrack 0,T]),$ $f(x,t)\in D_{k}(\Omega )$ and $%
\int_{\Omega }\omega (x)f(x,t)dx\neq 0$ for $\forall t\in \lbrack 0,T];$
\item $w\in C^{1}[0,T]$ with $w(t)\neq 0$ for $\forall t\in \lbrack 0,T]$
and $w(0)=\int_{\Omega }\omega (x)\varphi (x)dx.$
\end{enumerate}
Then there exists a unique smooth pair $(u,r)$ for the inverse problem
\eqref{4.1}-\eqref{4.2}.
\end{theorem}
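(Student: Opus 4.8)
The plan is to mirror the spectral argument of Theorem~\ref{thm1}, but with the point evaluation at $q$ replaced by the linear functional $\varphi\mapsto\int_\Omega\omega(x)\varphi(x)\,dx$. First I would expand the solution of the direct problem \eqref{4.1} in the eigenbasis $\{\phi_n\}$: writing $u_n(t)=\int_\Omega u(x,t)\phi_n(x)\,dx$, the equation $\partial_t u=\Delta_X u+r(t)f$ gives the ODE $u_n'(t)=-\lambda_n u_n(t)+r(t)f_n(t)$ with $u_n(0)=\varphi_n$, whence
\begin{equation*}
u(x,t)=\sum_{n=1}^{\infty}\left(\varphi_n e^{-\lambda_n t}+\int_0^t f_n(\tau)e^{-\lambda_n(t-\tau)}r(\tau)\,d\tau\right)\phi_n(x).
\end{equation*}
Set $\omega_n=\int_\Omega\omega(x)\phi_n(x)\,dx$; since $\omega\in L_2(\Omega)$, Bessel's inequality gives $\sum_n|\omega_n|^2<\infty$. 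Applying the functional \eqref{4.2} and using condition~(3) in the form $W(t):=\int_\Omega\omega(x)f(x,t)\,dx=\sum_n\omega_n f_n(t)\neq 0$, I would reduce \eqref{4.2} to the second-kind Volterra integral equation
\begin{equation*}
r(t)=\frac{w'(t)-\sum_{n=1}^{\infty}\omega_n\varphi_n(-\lambda_n)e^{-\lambda_n t}-\sum_{n=1}^{\infty}\omega_n f_n(t)r(t)}{\text{(rearranged)}},
\end{equation*}
or, more cleanly, differentiate $w(t)=\sum_n\omega_n u_n(t)$ to get $w'(t)=-\sum_n\lambda_n\omega_n u_n(t)+r(t)W(t)$ and then solve for $r$ after substituting the series for $u_n(t)$; this yields $r(t)=\big(w'(t)+\sum_n\lambda_n\omega_n u_n(t)\big)/W(t)$, which upon inserting the explicit $u_n$ becomes a Volterra equation of the second kind for $r$ with continuous kernel and continuous free term.

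The technical core is establishing the regularity and convergence of the series appearing in the kernel and free term of this Volterra equation. Here I would invoke Lemma~\ref{lemma1}: since $\varphi\in D_k(\Omega)$, Green's identities give $\varphi_n=(-1)^k\lambda_n^{-k}(\Delta_X^k\varphi)_n$, and combined with the Weyl-type asymptotics \eqref{eq15} and $k>\frac{\tilde\nu}{4}+1$, the series $\sum_n\lambda_n|\varphi_n|$ converges; the Cauchy--Schwarz step now uses $\sum_n|\omega_n|^2<\infty$ instead of the boundedness of $\phi_n(q)$, so $\sum_n\lambda_n|\omega_n\varphi_n|$ and $\sum_n\lambda_n|\omega_n||f_n(t)|$ both converge uniformly in $t$. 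This guarantees that the free term is continuous (indeed $C^1$, using $w\in C^1[0,T]$) and the Volterra kernel $K(t,\tau)=\frac{1}{W(t)}\sum_n\lambda_n\omega_n f_n(\tau)e^{-\lambda_n(t-\tau)}$ is continuous on $0\le\tau\le t\le T$, because $W$ is continuous and bounded away from $0$ by condition~(2) and compactness of $[0,T]$.

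Having reduced to a linear Volterra equation of the second kind with continuous data on a compact interval, the classical successive-approximation (Neumann series) argument gives a unique $r\in C[0,T]$. Then I would feed this $r$ back into the direct Cauchy--Dirichlet problem \eqref{4.1}, which has a unique (smooth) solution $u$ by the standard theory for the subelliptic heat equation; the consistency condition $w(0)=\int_\Omega\omega(x)\varphi(x)\,dx$ from~(3) ensures the nonlocal datum \eqref{4.2} holds at $t=0$, and the construction forces it for all $t$. This produces the unique smooth pair $(u,r)$. I expect the main obstacle to be purely bookkeeping: verifying that all the series may be differentiated term-by-term in $t$ (to justify $w'(t)=\sum_n\omega_n u_n'(t)$) and that the resulting free term is genuinely $C^1$, which again comes down to the uniform convergence of $\sum_n\lambda_n|\omega_n\varphi_n|$ and of the corresponding series with $f$—exactly the estimates supplied by Lemma~\ref{lemma1} together with $\omega\in L_2(\Omega)$. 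Finally, one checks that $p(t):=-r'(t)/r(t)$ recovers the original formulation, so that the equivalence noted after \eqref{4.1} transfers the conclusion to the inverse problem of the form \eqref{eq13}.
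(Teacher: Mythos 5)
Your proposal is correct and follows essentially the same route as the paper: eigenfunction expansion of the solution of \eqref{4.1}, termwise differentiation in $t$, application of the nonlocal functional to obtain the second-kind Volterra equation \eqref{4.4} for $r$, with the uniform convergence of the series $\sum_n\lambda_n|\varphi_n|$ (and the analogous series with $f$) supplied by Lemma \ref{lemma1} together with the boundedness of the coefficients $\int_\Omega\omega(x)\phi_n(x)\,dx$ coming from $\omega\in L_2(\Omega)$. The only cosmetic difference is that you phrase the coefficient bound via Bessel's inequality while the paper uses $\int_\Omega\omega^2(x)\,dx\le M$ directly; the argument is the same.
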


\begin{proof}
The solution of \eqref{4.1} has the form:

\begin{equation*}
u(x,t)=\sum_{n=1}^{\infty }\left( \varphi _{n}e^{-\lambda
_{n}t}+\int_{0}^{t}f_{n}(\tau )e^{-\lambda _{n}(t-\tau )}r(\tau )d\tau
\right) \phi _{n}(x),
\end{equation*}%
where $\varphi _{n}=\int_{\Omega }\varphi (x)\phi _{n}(x)dx$ and $%
f_{n}(t)=\int_{\Omega }f(x,t)\phi _{n}(x)dx.$ So, we have 
\begin{equation*}
u_{t}(x,t)=\sum_{n=1}^{\infty }\left( -\lambda _{n}\varphi _{n}e^{-\lambda
_{n}t}-\lambda _{n}\int_{0}^{t}f_{n}(\tau )e^{-\lambda _{n}(t-\tau )}r(\tau
)d\tau +f_{n}(t)r(t)\right) \phi _{n}(x).
\end{equation*}%
From the over-determination condition \eqref{4.2} it follows that 
\begin{equation*}
\int_{\Omega }\omega (x)u_{t}(x,t)dx=\sum_{n=1}^{\infty }\left( -\lambda
_{n}\varphi _{n}e^{-\lambda _{n}t}-\lambda _{n}\int_{0}^{t}f_{n}(\tau
)e^{-\lambda _{n}(t-\tau )}r(\tau )d\tau \right) \int_{\Omega }\omega
(x)\phi _{n}(x)dx
\end{equation*}%
\begin{equation*}
+r(t)\int_{\Omega }\omega (x)\sum_{n=1}^{\infty }f_{n}(t)\phi
_{n}(x)dx=\omega^{\prime }(t)
\end{equation*}%
or 
\begin{eqnarray} \label{4.4}
r(t) &=&\frac{w^{\prime }(t)+\sum_{n=1}^{\infty }\lambda _{n}\varphi
_{n}e^{-\lambda _{n}t}\int_{\Omega }\omega (x)\phi _{n}(x)dx}{\int_{\Omega
}\omega (x)f(x,t)dx}  \notag \\
&&   \\
&&+\frac{1}{\int_{\Omega }\omega (x)f(x,t)dx}\int_{0}^{t}\left(
\sum_{n=1}^{\infty }\lambda _{n}f_{n}(\tau )e^{-\lambda _{n}(t-\tau )}d\tau
\int_{\Omega }\omega (x)\phi _{n}(x)dx\right) r(\tau )d\tau  \notag
\end{eqnarray}%
since $\int_{\Omega }\omega (x)\sum_{n=1}^{\infty }f_{n}(t)\phi
_{n}(x)dx=\int_{\Omega }\omega (x)f(x,t)dx.$

Let $\int_{\Omega }\omega ^{2}(x)dx\leq M$ in \eqref{4.2} for some $M=const>0$. The series $$%
\sum_{n=1}^{\infty }\lambda _{n}\varphi _{n}e^{-\lambda _{n}t}\int_{\Omega
}\omega (x)\phi _{n}(x)dx$$ is uniformly convergent since the
majorant series $M\sum_{n=1}^{\infty }\lambda _{n}\left\vert \varphi
_{n}\right\vert $ is convergent by Lemma \ref{lemma1}. Thus, the Volterra integral equation
\eqref{4.4} has a unique continuous solution.
\end{proof}

\section{Particular cases}
\label{sec5}

\subsection{Laplacian}
In the classical case, when $X_i=\partial_{x_{i}},\; i=1,\dots, d$, one has the
usual (elliptic) Laplacian $\Delta$ instead of the subelliptic operator 
$\Delta_X$ in \eqref{inv1intro}. Our approach seems new even in this classical case. 

\subsection{Sub-Laplacians}

Let $\mathbb{G} = (\mathbb{R}^d, \circ)$ be a stratified Lie group and $\{X_1, \dots, X_m\},$ $m\leq d,$ be a system of generators for $\mathbb{G}$, i.e. 
a basis for the first strata of $\mathbb{G}$. It satisfies the
H\"ormander condition (H).  The sum of squares operator $$\Delta_\mathbb{G} = - \sum_{i=1}^m X_i^2$$  is called the sub-Laplacian on $\mathbb{G}$.
$\Delta_\mathbb{G}$ is elliptic if and only if  $\mathbb{G}$ 
is same as $(\mathbb{R}^d, +)$. It is clear that the above results are valid on stratified Lie groups.
Note that the class of stratified Lie groups includes the groups of Iwasawa type and the 
H-type groups. A simple example in  $\mathbb{R}^{3}$ is the operator
$$(\partial_{y}+2x\partial_{z})^{2}+(\partial_{x}-2y\partial_{z})^{2}.$$

\subsection{Baouendi-Grushin operator}

Let $z:=(x,y):=(x_{1},...,x_{m}, y_{1},...,y_{k})\in \mathbb{R}^{m}\times\mathbb{R}^{k}$ with $m,k\geq1$ and $m+k=d$. Let us consider the vector fields
$$X_{i}=\frac{\partial}{\partial x_{i}}, \;i=1,...,m, \;\;\; Y_{j}=|x|^{\gamma}\frac{\partial}{\partial y_{j}},\;\gamma\geq0,\;j=1,...,k.$$
The Baouendi-Grushin operator on $\mathbb{R}^{m+k}$ is defined by
\begin{equation}\label{Grush_op}
\Delta_{\gamma}=\sum_{i=1}^{m}X_{i}^{2}+\sum_{j=1}^{k}Y_{j}^{2}=\triangle_{x}+|x|^{2\gamma}\triangle_{y},
\end{equation}
where $\triangle_{x}$ and $\triangle_{y}$ stand for the standard Laplacians in the variables $x\in \mathbb{R}^{m}$ and $y\in \mathbb{R}^{k}$, respectively. Recall that the Baouendi-Grushin operator for a positive even integer $\gamma$ is a sum of squares of vector fields satisfying the H\"{o}rmander condition (H). In that case, the results of the present paper hold for the Baouendi-Grushin operator (cf. \cite{BCY14}). A simple example in  $\mathbb{R}^{2}$ is the following operator
$$\partial^{2}_{xx}+x^{2}\partial^{2}_{yy}.$$

\end{document}